\documentclass[12pt]{amsart}

\usepackage{mathrsfs,amssymb}

\textheight 210mm

\newtheorem{theorem}{Theorem}[section]
\newtheorem{lemma}[theorem]{Lemma}

\newtheorem{cor}[theorem]{Corollary}

\theoremstyle{definition}

\theoremstyle{remark}
\newtheorem{remark}[theorem]{Remark}

\newtheorem{example}[theorem]{Example}

\numberwithin{equation}{section}

\DeclareMathOperator*{\esssup}{ess\,sup}

\let \la=\lambda

\let \d=\delta
\let \o=\omega
\let \a=\alpha
\let \f=\varphi
\let \b=\beta

\let \O=\Omega

\let \pa=\partial

\begin{document}

\title[pointwise sparse domination]
{Some remarks on the pointwise sparse domination}

\author{Andrei K. Lerner}
\address{Department of Mathematics,
Bar-Ilan University, 5290002 Ramat Gan, Israel}
\email{lernera@math.biu.ac.il}

\author{Sheldy Ombrosi}
\address{Departamento de Matem\'atica\\
Universidad Nacional del Sur\\
Bah\'ia Blanca, 8000, Argentina}\email{sombrosi@uns.edu.ar}

\thanks{A. Lerner is supported by ISF grant No. 447/16, S. Ombrosi is supported by CONICET PIP 11220130100329CO, Argentina.}

\begin{abstract}
We obtain an improved version of the pointwise sparse domination principle established by the first author  in \cite{Le2}.
This allows us to determine nearly minimal assumptions on a singular integral operator $T$ for which it admits a sparse domination.
\end{abstract}

\keywords{Sparse bounds, singular integrals, the $T1$ theorem.}
\subjclass[2010]{42B20, 42B25}
\maketitle

\section{Introduction}
Sparse bounds for different operators have been a recent and active topic in harmonic analysis. Their remarkable feature
is that an operator, which is typically signed and non-local, is dominated (pointwise or dually) by a positive and localized expression of the form
$$\sum_{Q\in {\mathcal S}}\langle f\rangle_{p,Q}\chi_Q(x),$$
where $\langle f\rangle_{p,Q}^p=\frac{1}{|Q|}\int_Q|f|^p$, and ${\mathcal S}$ is a sparse family of cubes of~${\mathbb R}^n$.

Recall that the family of cubes ${\mathcal S}$ is called $\eta$-sparse, $0<~\eta~\le~1$, if for every cube
$Q\in {\mathcal S}$, there exists a measurable set $E_Q\subset Q$ such that $|E_Q|\ge \eta|Q|$, and the sets
$\{E_Q\}_{Q\in {\mathcal S}}$ are pairwise disjoint.

Localization and sparseness are two main ingredients which make sparse bounds especially effective in quantitative weighted norm inequalities. 

The literature about sparse bounds is too extensive to be given here in more or less adequate form. We mention only
that sparse bounds for Calder\'on-Zygmund operators can be found in \cite{CR,HRT,La,Le1,Le2,LN}. Also, there are
several general sparse domination principles \cite{BM,BFP,CCPO,Le2}.

In \cite{Le2}, a sparse domination principle was obtained in terms of the grand maximal truncated operator
$${\mathcal M}_Tf(x)=\sup_{Q\ni x}\|T(f\chi_{{\mathbb R}^n\setminus 3Q})\|_{L^{\infty}(Q)}$$
defined for a given operator $T$.

\vskip 2mm
\noindent
{\bf Theorem A}\,\cite{Le2}.
{\it
Assume that $T$ is a sublinear operator of weak type $(q,q)$ and ${\mathcal M}_T$ is of weak type $(r,r)$, where $1\le q\le r<\infty$.
Then, for every compactly supported $f\in L^r({\mathbb R}^n)$, there exists a sparse family
${\mathcal S}$ such that
$$
|Tf(x)|\le C\sum_{Q\in {\mathcal S}}\langle f\rangle_{r,Q}\chi_Q(x)
$$
for a.e. $x\in {\mathbb R}^n$, where $C=c_{n,q,r}(\|T\|_{L^q\to L^{q,\infty}}+\|{\mathcal M}_T\|_{L^r\to L^{r,\infty}})$.
}
\vskip 2mm

In this paper we improve the above result by means of weakening the two main hypotheses.

First,
instead of the weak type $(q,q)$ of $T$ we assume a weaker property that there exists a non-increasing function $\psi_{T,q}(\la), 0<\la<1,$
such that for every cube $Q$ and for every $f\in L^q(Q)$,
$$
|\{x\in Q:|T(f\chi_Q)(x)|>\psi_{T,q}(\la)\langle f\rangle_{q,Q}\}|\le \la|Q|\quad(0<\la<1).
$$
We call this the $W_q$ property. If $T$ is of weak type $(q,q)$, then the $W_q$ property holds with $\psi_{T,q}(\la)=\|T\|_{L^q\to L^{q,\infty}}\la^{-1/q}$.

Second, we replace the operator ${\mathcal M}_T$
by a more flexible operator ${\mathcal M}^{\#}_{T,\a}$ defined for $\a>0$ by
$${\mathcal M}^{\#}_{T,\a}f(x)=\sup_{Q\ni x}\,\,\esssup_{x',x''\in Q}|T(f\chi_{{\mathbb R}^n\setminus \a Q})(x')-T(f\chi_{{\mathbb R}^n\setminus \a Q})(x'')|.$$

Our main result is the following.

\begin{theorem}\label{sdp} Let $T$ be a sublinear operator satisfying the $W_q$ condition and such that
${\mathcal M}^{\#}_{T,\a}$ is of weak type $(r,r)$ for some $\a\ge 3$, where $1\le q,r<\infty$. Let $s=\max(q,r)$.
Then, for every compactly supported $f\in L^s({\mathbb R}^n)$, there exists a $\frac{1}{2\cdot \a^n}$-sparse family ${\mathcal S}$ such that
$$
|Tf(x)|\le C\sum_{Q\in {\mathcal S}}\langle f\rangle_{s,Q}\chi_Q(x)
$$
for a.e. $x\in {\mathbb R}^n$, where $C=c_{n,r,s,\a}\big(\psi_{T,q}(1/12\cdot (2\a)^{n})+\|{\mathcal M}_T^{\#}\|_{L^r\to L^{r,\infty}}\big)$.
\end{theorem}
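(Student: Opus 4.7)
The plan is to prove a one-step recursive local estimate and iterate it. The local lemma I aim for is: for every cube $Q_0$ there is a pairwise disjoint family $\{P_j\}\subset\mathcal{D}(Q_0)$ with $\sum_j|P_j|\le|Q_0|/2$ such that, a.e.,
$$
|T(f\chi_{\alpha Q_0})(x)|\chi_{Q_0}(x)\le C\langle f\rangle_{s,\alpha Q_0}\chi_{Q_0}(x)+\sum_j\bigl(|T(f\chi_{\alpha P_j})(x)|+C\langle f\rangle_{s,\alpha P_j}\bigr)\chi_{P_j}(x),
$$
with $C$ as in the theorem. Granting this, iterating on each $P_j$ builds a countable family $\mathcal{F}$, which is $\tfrac12$-sparse via the choice sets $E_{Q'}=Q'\setminus\bigcup(\text{children of }Q')$; then $\mathcal{S}=\{\alpha Q':Q'\in\mathcal{F}\}$ is $\tfrac{1}{2\alpha^n}$-sparse (same choice sets, inflated cubes), and telescoping combined with $\chi_{Q'}\le\chi_{\alpha Q'}$ yields the advertised pointwise bound on $Q_0$. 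To cover $\mathbb{R}^n$, pick a cube $Q_0\supset\operatorname{supp} f$ and partition $\mathbb{R}^n\setminus Q_0$ into $3^n-1$ adjacent congruent cubes $R$; the hypothesis $\alpha\ge 3$ makes $\alpha R\supset Q_0\supset\operatorname{supp} f$, so $Tf=T(f\chi_{\alpha R})$ on $R$ and the lemma applies to each $R$ separately.

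To prove the local lemma, set
$$
E=\{x\in Q_0:|T(f\chi_{\alpha Q_0})(x)|>A_1\langle f\rangle_{q,\alpha Q_0}\}\cup\{x\in Q_0:{\mathcal M}^{\#}_{T,\alpha}(f\chi_{\alpha Q_0})(x)>A_2\langle f\rangle_{r,\alpha Q_0}\},
$$
with $A_1=\psi_{T,q}(\lambda)$, $\lambda\asymp 1/\alpha^n$, and $A_2\asymp\alpha^{n/r}\|{\mathcal M}^{\#}_{T,\alpha}\|_{L^r\to L^{r,\infty}}$. The $W_q$-property applied to $\alpha Q_0$ controls the first piece, the weak-$(r,r)$ bound for ${\mathcal M}^{\#}_{T,\alpha}$ controls the second, and constants can be chosen so that $|E|\le 2^{-n-2}|Q_0|$. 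Run a dyadic Calder\'on--Zygmund stopping-time in $Q_0$ at level $\tau=2^{-n-1}$ to obtain disjoint $\{P_j\}$ with $\sum|P_j|\le|Q_0|/2$, $|P_j\cap E|\le|P_j|/2$, and a.e.\ $x\in Q_0\setminus\bigcup P_j$ lying outside $E$. On $Q_0\setminus\bigcup P_j$ the bound $|T(f\chi_{\alpha Q_0})(x)|\le A_1\langle f\rangle_{s,\alpha Q_0}$ is immediate. On each $P_j$, use the automatic inclusion $\alpha P_j\subset\alpha Q_0$ to split $T(f\chi_{\alpha Q_0})=T(f\chi_{\alpha P_j})+T(f\chi_{\alpha Q_0\setminus\alpha P_j})$, and for any $x_0\in P_j\setminus E$ invoke the definition of ${\mathcal M}^{\#}_{T,\alpha}$ with $P_j$ as sup-cube to get
$$
\esssup_{x',x''\in P_j}|T(f\chi_{\alpha Q_0\setminus\alpha P_j})(x')-T(f\chi_{\alpha Q_0\setminus\alpha P_j})(x'')|\le A_2\langle f\rangle_{r,\alpha Q_0}.
$$
Pin down a reference $y\in G_j$, the set on which $y\notin E$ and $|T(f\chi_{\alpha P_j})(y)|\le\psi_{T,q}(\lambda')\langle f\rangle_{q,\alpha P_j}$ (the latter via $W_q$ on $\alpha P_j$, whose bad set has measure at most $\lambda'\alpha^n|P_j|$ inside $P_j$). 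Assembling at $y$ the bounds for $T(f\chi_{\alpha Q_0})(y)$ and $T(f\chi_{\alpha P_j})(y)$ yields, for a.e.\ $x\in P_j$,
$$
|T(f\chi_{\alpha Q_0})(x)|\le|T(f\chi_{\alpha P_j})(x)|+(A_1+A_2)\langle f\rangle_{s,\alpha Q_0}+\psi_{T,q}(\lambda')\langle f\rangle_{s,\alpha P_j},
$$
which is the lemma once monotonicity of $\psi_{T,q}$ is used to concentrate the two $\psi$-terms at a common threshold $\asymp 1/\alpha^n$, producing the $\psi_{T,q}(1/(12(2\alpha)^n))$ of the statement.

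The main obstacle is arranging $|G_j|>0$: the set $G_j$ must simultaneously avoid the CZ-bad part $E\cap P_j$ (of measure up to $|P_j|/2$) and the $W_q$-bad set for $\alpha P_j$ (of measure up to $\lambda'\alpha^n|P_j|$), so $\lambda'$ cannot be pushed below $\asymp 1/\alpha^n$ without destroying the stopping-time balance. This, combined with the factor $\alpha^{n/r}$ needed to absorb the weak-$(r,r)$ mass of ${\mathcal M}^{\#}_{T,\alpha}$, is exactly what pins down the explicit constants in the final $C$; the hypothesis $\alpha\ge 3$ itself plays no role in the local step and enters only to make the $\mathbb{R}^n$-covering by congruent adjacent cubes work.
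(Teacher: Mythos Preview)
Your argument is essentially the paper's: the same exceptional set (minus the $M_s$ component, which the paper includes only to absorb $\langle f\rangle_{s,\a P_j}$ into $\langle f\rangle_{s,\a Q_0}$; carrying the $\langle f\rangle_{s,\a P_j}$ term through the iteration as you do is an equally valid variant), the same local Calder\'on--Zygmund stopping at height $2^{-n-1}$, the same oscillation trick on $P_j$ via a reference point in $P_j\setminus E$, and the same passage from $\mathcal F$ to $\mathcal S=\{\a R:R\in\mathcal F\}$.

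One genuine slip: you cannot ``partition $\mathbb{R}^n\setminus Q_0$ into $3^n-1$ adjacent congruent cubes''---that only covers $3Q_0\setminus Q_0$. You need the iterative covering the paper spells out in its Lemma~\ref{ci}: cover $3Q_0\setminus Q_0$ by $3^n-1$ congruent cubes, then $9Q_0\setminus 3Q_0$ likewise, and so on, obtaining a countable partition $\{Q_j\}$ of $\mathbb{R}^n$ with $\operatorname{supp} f\subset 3Q_j\subset\a Q_j$ for every $j$. With this correction your proof goes through.
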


Theorem \ref{sdp} provides a more convenient tool compared to Theorem~A. Indeed,
there is no need now to work with the grand maximal truncated operator ${\mathcal M}_T$, which typically requires some additional effort.
In many particular cases (see examples in Section 5) the weak type $(r,r)$ of ${\mathcal M}^{\#}_{T,\a}$ follows from the estimate
$${\mathcal M}^{\#}_{T,\a}f(x)\le CM_rf(x),$$
where $M_rf=M(|f|^r)^{1/r}$ and $M$ is the Hardy-Littlewood maximal operator.

The fact that we do not require the weak type $(q,q)$ of $T$ in Theorem~\ref{sdp} allows us to obtain a sparse domination result for
a singular integral operator $T$ with minimal set of assumptions close in the spirit to the $T1$ theorem.

The paper is organized as follows. In Section 2 we present a proof of Theorem \ref{sdp}. We also show separately how this proof looks in the model case
of Calder\'on-Zygmund operators. In Section 3 we discuss some variations and extensions of Theorem \ref{sdp}. A sparse $T1$-type result is presented in Section 4.
Finally, in Section 5 we collect different examples (mostly known) of operators admitting the pointwise sparse domination. We show how Theorem \ref{sdp} simplifies
sparse bounds for these operators.

\section{Proof of Theorem \ref{sdp}}
In this section we prove Theorem \ref{sdp}. The proof is a variation of the proof of Theorem A, with an additional twist.
Although some parts of both proofs are almost identical, we provide a complete proof for reader's convenience.
First, we separate a common ingredient of both proofs in the following lemma.

\begin{lemma}\label{ci}
Assume that for any compactly supported $f\in L^s({\mathbb R}^n)$ and for every cube $Q$, there exists a
$\frac{1}{2}$-sparse family ${\mathcal F}_Q$ of subcubes of~$Q$ such that for a.e. $x\in Q$,
\begin{equation}\label{sploc}
|T(f\chi_{\a Q})|\chi_Q(x)\le C\sum_{R\in {\mathcal F}_Q}\langle f\rangle_{s,\a R}\chi_R(x)\quad(\a\ge 3).
\end{equation}
Then there exists a $\frac{1}{2\cdot \a^n}$-sparse family ${\mathcal S}$ such that for a.e. $x\in {\mathbb R}^n$,
\begin{equation}\label{sdom}
|Tf(x)|\le C\sum_{R\in {\mathcal S}}\langle f\rangle_{s,R}\chi_R(x).
\end{equation}
\end{lemma}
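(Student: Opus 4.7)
The plan is to reduce the global sparse bound to the local one by partitioning $\mathbb{R}^n$ into pairwise disjoint cubes, each of which contains $\mathrm{supp}\, f$ in its $\alpha$-dilate, and then assembling the $\frac{1}{2}$-sparse families produced by \eqref{sploc} on each such cube into a single family, at the cost of $\alpha$-dilating every element.

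First I would construct the partition. Since $f$ is compactly supported, pick a cube $Q^0$ centered at the origin with $\mathrm{supp}\, f \subset Q^0$, and set $Q^{k+1}=3Q^k$ for $k\ge 0$. Each annular shell $Q^{k+1}\setminus Q^k$ decomposes naturally into $3^n-1$ translates of $Q^k$ via the $3\times\cdots\times 3$ subdivision of $Q^{k+1}=3Q^k$ with its central subcube removed. Let $\mathcal{P}$ consist of $Q^0$ together with all these annular cubes as $k$ varies; this is a partition of $\mathbb{R}^n$ by pairwise disjoint cubes. A short geometric check --- each $P$ from an annular shell is adjacent to and of the same side length as $Q^k$, so $3P\supset Q^k\supset\mathrm{supp}\, f$ --- shows $\mathrm{supp}\, f\subset 3P\subset\alpha P$ for every $P\in\mathcal{P}$, using $\alpha\ge 3$.

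Next, for each $P\in\mathcal{P}$, the containment $\mathrm{supp}\, f\subset\alpha P$ gives $f=f\chi_{\alpha P}$, and hence $Tf=T(f\chi_{\alpha P})$, so \eqref{sploc} yields a $\frac{1}{2}$-sparse family $\mathcal{F}_P$ of subcubes of $P$ with
\[
|Tf(x)|\chi_P(x)\le C\sum_{R\in\mathcal{F}_P}\langle f\rangle_{s,\alpha R}\chi_R(x)\quad\text{for a.e.}\ x\in P.
\]
Summing over $P\in\mathcal{P}$ gives the bound on $\mathbb{R}^n$. Defining $\mathcal{S}:=\{\alpha R:R\in\mathcal{F}_P,\ P\in\mathcal{P}\}$ and using $\chi_R\le\chi_{\alpha R}$, this summed inequality implies \eqref{sdom}.

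It remains to verify that $\mathcal{S}$ is $\frac{1}{2\alpha^n}$-sparse. For each $R'=\alpha R\in\mathcal{S}$ take the sparse witness $E_R\subset R$ provided by $\mathcal{F}_P$ and declare $E_{R'}:=E_R$. Then $E_{R'}\subset R'$ with $|E_{R'}|\ge\frac{1}{2}|R|=\frac{1}{2\alpha^n}|R'|$, and pairwise disjointness holds within each $\mathcal{F}_P$ by hypothesis and across different $P$'s because $E_R\subset P$ and the $P$'s are disjoint. The only nonroutine step is producing the partition with $\mathrm{supp}\, f\subset\alpha P$ for every piece; once it is in hand, everything else amounts to bookkeeping and absorbing the dilation by $\alpha$ into a factor $\alpha^n$ in the sparseness constant.
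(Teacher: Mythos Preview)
Your proof is correct and follows essentially the same route as the paper: build the partition of $\mathbb{R}^n$ by taking a cube $Q^0\supset\mathrm{supp}\,f$, tiling successive triadic annuli $3^{k+1}Q^0\setminus 3^kQ^0$ by the $3^n-1$ congruent subcubes, applying the local hypothesis on each piece (since $\mathrm{supp}\,f\subset 3P\subset\alpha P$), and then passing to the dilated family $\{\alpha R\}$ at the cost of an $\alpha^n$ factor in the sparseness constant. Your presentation is slightly more detailed on the verification of sparseness for $\mathcal{S}$, but the argument is the same.
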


\begin{proof}
Take a partition of ${\mathbb R}^n$ by cubes $Q_j$ such that $\text{supp}\,(f)\subset \a Q_j$ for each $j$. For example, take a cube $Q_0$ such that
$\text{supp}\,(f)\subset Q_0$ and cover $3Q_0\setminus Q_0$ by $3^n-1$ congruent cubes $Q_j$. Each of them satisfies $Q_0\subset 3Q_j\subset \a Q_j$. Next, in the same way cover
$9Q_0\setminus 3Q_0$, and so on. The union of resulting cubes, including $Q_0$, will satisfy the desired property.

Having such a partition, apply (\ref{sploc}) to each $Q_j$ instead of $Q$. We obtain a $\frac{1}{2}$-sparse family ${\mathcal F}_{Q_j}$ such that for a.e. $x\in Q_j$,
$$
|Tf(x)|\le C\sum_{R\in {\mathcal F_{Q_j}}}\langle f\rangle_{s,\a R}\chi_R(x).
$$
Therefore, setting ${\mathcal F}=\cup_{j}{\mathcal F}_{Q_j}$, we obtain
that ${\mathcal F}$ is $\frac{1}{2}$-sparse and for a.e. $x\in {\mathbb R}^n$,
$$|Tf(x)|\le C\sum_{R\in {\mathcal F}}\langle f\rangle_{s,\a R}\chi_{R}(x).$$
This proves (\ref{sdom}) with a $\frac{1}{2\cdot \a^n}$-sparse family ${\mathcal S}=\{\a R: R\in {\mathcal F}\}$.
\end{proof}

Before giving the proof of Theorem \ref{sdp}, we show that it especially elementary in the model case of
Calder\'on-Zygmund operators.

We say that $T$ is a Calder\'on-Zygmund operator if $T$ is a linear operator of weak type $(1,1)$ such that
$$Tf(x)=\int_{{\mathbb R}^n}K(x,y)f(y)dy\quad\text{for all}\,\,x\not\in \text{supp}\,f$$
with kernel $K$ satisfying the smoothness condition
\begin{equation}\label{smooth}
|K(x,y)-K(x',y)|\le \o\left(\frac{|x-x'|}{|x-y|}\right)\frac{1}{|x-y|^n}
\end{equation}
for $|x-x'|<|x-y|/2$, where $[\o]_{\text{Dini}}=\int_0^1\o(t)\frac{dt}{t}<\infty.$

\begin{theorem}\label{czc} Let $T$ be a Calder\'on-Zygmund operator.
For every compactly supported $f\in L^1({\mathbb R}^n)$, there exists a $\frac{1}{2\cdot(5\sqrt n)^n}$-sparse family~${\mathcal S}$ such that for a.e. $x\in {\mathbb R}^n$,
\begin{equation}\label{sdcz}
|Tf(x)|\le C_n(\|T\|_{L^1\to L^{1,\infty}}+[\o]_{\rm{Dini}})\sum_{Q\in {\mathcal S}}\langle f\rangle_{1,Q}\chi_Q(x).
\end{equation}
\end{theorem}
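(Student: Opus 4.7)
The plan is to apply Lemma~\ref{ci} with $s=1$ and $\alpha=5\sqrt{n}$, which reduces Theorem~\ref{czc} to producing, for each cube $Q$, a $\tfrac12$-sparse family ${\mathcal F}_Q$ of subcubes of $Q$ satisfying
$$|T(f\chi_{\alpha Q})|\chi_Q(x)\le C_n\big(\|T\|_{L^1\to L^{1,\infty}}+[\omega]_{\mathrm{Dini}}\big)\sum_{R\in{\mathcal F}_Q}\langle f\rangle_{1,\alpha R}\chi_R(x).$$
I would build ${\mathcal F}_Q$ recursively: the single inductive step exhibits pairwise disjoint dyadic subcubes $\{P_j\}\subset Q$ with $\sum_j|P_j|\le|Q|/2$ for which, a.e.\ on $Q$,
$$|T(f\chi_{\alpha Q})(x)|\le C\langle f\rangle_{1,\alpha Q}\chi_Q(x)+\sum_j|T(f\chi_{\alpha P_j})(x)|\chi_{P_j}(x),$$
after which one iterates on each $P_j$, sets ${\mathcal F}_Q=\{Q\}\cup\bigcup_j{\mathcal F}_{P_j}$, and takes $E_Q=Q\setminus\bigcup_jP_j$ as the associated sparse set. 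Since $P_j$ is a dyadic subcube of $Q$ one easily checks $\alpha P_j\subset\alpha Q$, so the decomposition $f\chi_{\alpha Q}=f\chi_{\alpha P_j}+f\chi_{\alpha Q\setminus\alpha P_j}$ is clean.

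To select the $P_j$, I would introduce the exceptional set
$$E=\{x\in Q:|T(f\chi_{\alpha Q})(x)|>c_1\langle f\rangle_{1,\alpha Q}\}\cup\{x\in Q:M(f\chi_{\alpha Q})(x)>c_2\langle f\rangle_{1,\alpha Q}\},$$
with $c_1\sim\alpha^n\|T\|_{L^1\to L^{1,\infty}}$ and $c_2\sim_n 1$ chosen by the weak $(1,1)$ bounds of $T$ and $M$ so that $|E|\le|Q|/(6\cdot 2^n)$. A Calder\'on--Zygmund stopping at level $1/(3\cdot 2^n)$ on the dyadic subgrid of $Q$ yields maximal subcubes $\{P_j\}$ with $|E\cap P_j|>|P_j|/(3\cdot 2^n)$; routinely $\sum_j|P_j|\le|Q|/2$, $|E\cap P_j|\le|P_j|/3$ (from non-selection of the parent), and $|E\cap(Q\setminus\bigcup_jP_j)|=0$. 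On $Q\setminus\bigcup_jP_j$ we have $|T(f\chi_{\alpha Q})|\le c_1\langle f\rangle_{1,\alpha Q}$ pointwise, which is the desired bound there.

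The heart of the argument is the estimate on each $P_j$, which by sublinearity reduces to showing $|T(f\chi_{\alpha Q\setminus\alpha P_j})(x)|\le C\langle f\rangle_{1,\alpha Q}$ for a.e.\ $x\in P_j$. The choice $\alpha=5\sqrt{n}$ guarantees $|x-x'|<|x-y|/2$ whenever $x,x'\in P_j$ and $y\notin\alpha P_j$, so~(\ref{smooth}) combined with a dyadic annular decomposition of ${\mathbb R}^n\setminus\alpha P_j$ delivers the oscillation bound
$$|T(f\chi_{\alpha Q\setminus\alpha P_j})(x)-T(f\chi_{\alpha Q\setminus\alpha P_j})(x_j)|\le C_n[\omega]_{\mathrm{Dini}}M(f\chi_{\alpha Q})(x_j)\qquad(x,x_j\in P_j).$$
I would then choose $x_j\in P_j$ satisfying (a) $x_j\notin E$, and (b) $|T(f\chi_{\alpha P_j})(x_j)|\le 3\alpha^n\|T\|_{L^1\to L^{1,\infty}}\langle f\rangle_{1,\alpha P_j}$: the set failing (b) has measure at most $|P_j|/3$ by weak $(1,1)$ of $T$ applied to the cube $\alpha P_j$, and $|E\cap P_j|\le|P_j|/3$, so the union of the two bad sets has measure $<|P_j|$. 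Using linearity to write $T(f\chi_{\alpha Q\setminus\alpha P_j})(x_j)=T(f\chi_{\alpha Q})(x_j)-T(f\chi_{\alpha P_j})(x_j)$ and the inequality $\langle f\rangle_{1,\alpha P_j}\le M(f\chi_{\alpha Q})(x_j)\le c_2\langle f\rangle_{1,\alpha Q}$ (valid because $x_j\in\alpha P_j\subset\alpha Q$) closes the estimate.

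The main obstacle is this last bound: the weak $(1,1)$ control of $T(f\chi_{\alpha P_j})$ at $x_j$ naturally involves $\langle f\rangle_{1,\alpha P_j}$ rather than $\langle f\rangle_{1,\alpha Q}$, and the argument only survives because the Hardy--Littlewood maximal inequality applied at the carefully chosen $x_j$ converts one average into the other. Everything else is routine bookkeeping of constants, and Lemma~\ref{ci} converts the $\tfrac12$-sparse family ${\mathcal F}_Q$ into the $\tfrac{1}{2\cdot(5\sqrt{n})^n}$-sparse family asserted in the theorem.
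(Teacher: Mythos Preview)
Your proposal is correct and follows essentially the same route as the paper's own proof: the exceptional set built from weak $(1,1)$ control of $T$ and $M$, the local Calder\'on--Zygmund decomposition of its indicator, the Dini oscillation estimate to transfer $T(f\chi_{\alpha Q\setminus\alpha P_j})$ between points of $P_j$, and the selection of a good point $x_j\in P_j$ where both $|T(f\chi_{\alpha Q})|$ and $|T(f\chi_{\alpha P_j})|$ are small (the latter then converted to a multiple of $\langle f\rangle_{1,\alpha Q}$ via the maximal function at $x_j$) --- these are exactly the steps the paper carries out, with only inessential differences in the numerical thresholds.
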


This result is well known (see \cite{Le2} and the history therein). Its proof in \cite{Le2} is based on Theorem A.
The proof given below illustrates in a simplified form the main idea behind the proof of Theorem \ref{sdp}.

\begin{proof}[Proof of Theorem \ref{czc}]
Given a cube $Q$, denote $Q^*=5\sqrt nQ$. Let us also use the notation $f_Q=\frac{1}{|Q|}\int_Qf$.

It follows from the smoothness condition in the standard way that for every cube $P$ and for
all $x,x'\in P$,
\begin{equation}\label{pr1}
|T(f\chi_{{\mathbb R}^n\setminus P^*})(x)-T(f\chi_{{\mathbb R}^n\setminus P^*})(x')|\le c_n[\o]_{\text{Dini}}\inf_{P}Mf.
\end{equation}

Fix a cube $Q$. By the weak type $(1,1)$ of $M$ and $T$, there is $c_n'>0$ for which the set
$$\Omega=\Big\{x\in Q:\max\Big(\frac{M(f\chi_{Q^*})(x)}{c_n'},\frac{|T(f\chi_{Q^*})(x)|}{c_n'\|T\|_{L^1\to L^{1,\infty}}}\Big)>|f|_{Q^*}\Big\}$$
satisfies $|\Omega|\le \frac{1}{2^{n+2}}|Q|$. Denote $A=c_n'\|T\|_{L^1\to L^{1,\infty}}$.

Apply the local Calder\'on-Zygmund decomposition to $\chi_{\O}$ on $Q$ at height $\la=\frac{1}{2^{n+1}}$. We obtain a family of
pairwise disjoint cubes $\{P_j\}\subset~Q$ such that
\begin{equation}\label{covpr}
\frac{1}{2^{n+1}}|P_j|\le |P_j\cap\Omega|\le \frac{1}{2}|P_j|
\end{equation}
and $|\Omega\setminus\cup_jP_j|=0$. The latter property implies
\begin{equation}\label{TK}
|T(f\chi_{Q^*})(x)|\le A|f|_{Q^*}\quad\text{for a.e.}\,\,x\in Q\setminus \cup_jP_j.
\end{equation}

By (\ref{pr1}), for all $x\in P_j$ and $x'\in P_j\setminus \Omega$,
\begin{eqnarray}
|T(f\chi_{Q^*\setminus P_j^*})(x)|&\le&  c_n[\o]_{\text{Dini}}\inf_{P_j}M(f\chi_{Q^*})+|T(f\chi_{Q^*\setminus P_j^*})(x')|\nonumber\\
&\le& (c_nc_n'[\o]_{\text{Dini}}+A)|f|_{Q^*}+|T(f\chi_{P_j^*})(x')|.\label{pres}
\end{eqnarray}
Next, by (\ref{covpr}), $|P_j\setminus\Omega|\ge \frac{1}{2}|P_j|$. On the other hand,
$$|\{x\in P_j:|T(f\chi_{P_j^*})(x)|>A|f|_{P_j^*}\}|\le \frac{1}{2^{n+2}}|P_j|.$$
Therefore,
$$\inf_{P_j\setminus\Omega}|T(f\chi_{P_j^*})|\le A|f|_{P_j^*}\le c_n'A|f|_{Q^*},$$
which, combined with (\ref{pres}), implies that for all $x\in P_j$,
$$|T(f\chi_{Q^*\setminus P_j^*})(x)|\le A'|f|_{Q^*},$$
where $A'=\a_n(\|T\|_{L^1\to L^{1,\infty}}+[\o]_{\rm{Dini}})$.

From this and from (\ref{TK}), for a.e. $x\in Q$,
\begin{eqnarray}
|T(f\chi_{Q^*})|\chi_{Q}(x)&\le& A|f|_{Q^*}\chi_{Q\setminus\cup_jP_j}(x)+\sum_{j}|T(f\chi_{Q^*})|\chi_{P_j}(x)\nonumber\\
&\le &(A+A')|f|_{Q^*}+\sum_{j}|T(f\chi_{P_j^*})|\chi_{P_j}(x).\label{spc}
\end{eqnarray}

By (\ref{covpr}), $\sum_j|P_j|\le \frac{1}{2}|Q|$. Therefore, iterating (\ref{spc}), we obtain a $\frac{1}{2}$-sparse family ${\mathcal F}_Q$ of subcubes of $Q$ such that
$$
|T(f\chi_{Q^*})(x)|\le (A+A')\sum_{R\in {\mathcal F}_Q}|f|_{R^*}\chi_R(x)
$$
for a.e. $x\in Q$. It remains to apply Lemma \ref{ci}.
\end{proof}

The operator ${\mathcal M}_{T,\a}^{\#}$ in the above proof appears implicitly in (\ref{pr1}). In the proof of Theorem \ref{sdp}, it
appears explicitly and contributes to the exceptional set $\Omega$.

\begin{proof}[Proof of Theorem \ref{sdp}]
Given a cube $Q$, denote $Q^*=\a Q$. Next, set
$$\widetilde M_{T}f=\max(|Tf|,{\mathcal M}_{T,\a}^{\#}f).$$

By the weak type $(1,1)$ of $M$ and by the theorem assumptions along with H\"older's inequality, one can choose $c=c_{n,s,\a}>0$ and
$$A=2\psi_{T,q}(1/12\cdot (2\a)^{n})+c_{n,r,\a}\|{\mathcal M}_{T,\a}^{\#}\|_{L^r\to L^{r,\infty}}$$
for which the set
$$\Omega=\Big\{x\in Q:\max\Big(\frac{M_s(f\chi_{Q^*})(x)}{c},\frac{|\widetilde M_{T}(f\chi_{Q^*})(x)|}{A}\Big)>\langle f\rangle_{s,Q^*}\Big\}$$
satisfies $|\O|\le \frac{1}{2^{n+2}}|Q|$.

Apply the local Calder\'on-Zygmund decomposition to $\chi_{\O}$ on $Q$ at height $\la=\frac{1}{2^{n+1}}$. We obtain a family of
pairwise disjoint cubes $\{P_j\}\subset~Q$ such that
\begin{equation}\label{cov}
\frac{1}{2^{n+1}}|P_j|\le |P_j\cap\Omega|\le \frac{1}{2}|P_j|
\end{equation}
and $|\Omega\setminus\cup_jP_j|=0$. The latter property implies
\begin{equation}\label{Tle}
|T(f\chi_{Q^*})(x)|\le A\langle f\rangle_{s,Q^*}\quad\text{for a.e.}\,\,x\in Q\setminus \cup_jP_j.
\end{equation}

For almost all $x\in P_j$ and $x'\in P_j\setminus\O$,
\begin{eqnarray}
|T(f\chi_{Q^*\setminus P_j^*})(x)|&\le& \inf_{P_j}{\mathcal M}_{T,\a}^{\#}(f\chi_{Q^*})+|T(f\chi_{Q^*\setminus P_j^*})(x')|\nonumber\\
&\le& 2A\langle f\rangle_{s,Q^*}+|T(f\chi_{P_j^*})(x')|.\label{pres1}
\end{eqnarray}
Next, by (\ref{cov}), $|P_j\setminus\Omega|\ge \frac{1}{2}|P_j|$. On the other hand,
$$|\{x\in P_j:|T(f\chi_{P_j^*})(x)|>A\langle f\rangle_{s,P_j^*}\}|\le \frac{1}{2^{n+2}}|P_j|.$$
Therefore,
$$\inf_{P_j\setminus\Omega}|T(f\chi_{P_j^*})|\le A\langle f\rangle_{s,P_j^*}\le cA\langle f\rangle_{s,Q^*},$$
which, combined with (\ref{pres1}), implies that for all $x\in P_j$,
$$|T(f\chi_{Q^*\setminus P_j^*})(x)|\le (2+c)A\langle f\rangle_{s,Q^*}.$$

From this and from (\ref{Tle}), for a.e. $x\in Q$,
\begin{eqnarray}
|T(f\chi_{Q^*})|\chi_{Q}(x)&\le& A\langle f\rangle_{s,Q^*}\chi_{Q\setminus\cup_jP_j}(x)+\sum_{j}|T(f\chi_{Q^*})|\chi_{P_j}(x)\nonumber\\
&\le &(3+c)A\langle f\rangle_{s,Q^*}+\sum_{j}|T(f\chi_{P_j^*})|\chi_{P_j}(x).\label{sp}
\end{eqnarray}

By (\ref{cov}), $\sum_j|P_j|\le \frac{1}{2}|Q|$. Therefore, iterating (\ref{sp}), we obtain a $\frac{1}{2}$-sparse family ${\mathcal F}_Q$ of subcubes of $Q$ such that for a.e. $x\in Q$,
$$
|T(f\chi_{Q^*})(x)|\le (3+c)A\sum_{R\in {\mathcal F}_Q}\langle f\rangle_{s,R^*}\chi_R(x),
$$
which, along with Lemma \ref{ci}, completes the proof.
\end{proof}

\section{Some variations of Theorem \ref{sdp}}
We mention here some simple but useful variations/extensions of Theorem \ref{sdp}.
Let us start with the following, a slightly more precise version of Theorem \ref{sdp}.

\begin{theorem}\label{mpv} Let $1\le q,r<\infty$ and $s=\max(q,r)$. Let $f$ be a compactly supported function from $L^s({\mathbb R}^n)$.
Assume that $T$ is a sublinear operator satisfying the following property: there exist non-increasing functions $\psi$ and $\f$ such that for any cube $Q$,
$$
|\{x\in Q:|T(f\chi_Q)(x)|>\psi(\la)\langle f\rangle_{q,Q}\}|\le \la|Q|\quad(0<\la<1)
$$
and
$$
|\{x\in Q:{\mathcal M}_{T,\a}^{\#}(f\chi_Q)(x)>\f(\la)\langle f\rangle_{r,Q}\}|\le \la|Q|\quad(0<\la<1)
$$
for some $\a\ge 3$. Then there exists a $\frac{1}{2\cdot \a^n}$-sparse family ${\mathcal S}$ such that
$$
|Tf(x)|\le C\sum_{Q\in {\mathcal S}}\langle f\rangle_{s,Q}\chi_Q(x)
$$
for a.e. $x\in {\mathbb R}^n$, where $C=c_{n,s}\Big(\psi(1/12\cdot (2\a)^{n})+\f(1/12\cdot (2\a)^{n})\Big)$.
\end{theorem}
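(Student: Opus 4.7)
The plan is to follow the proof of Theorem \ref{sdp} with only cosmetic adjustments, the point being that the localized distributional inequalities are now hypotheses rather than consequences of global weak-type norms. First, I would invoke Lemma \ref{ci}, which reduces matters to the local estimate (\ref{sploc}): for every cube $Q$ with $\text{supp}\,(f)\subset\a Q$, I must produce a $\frac{1}{2}$-sparse family ${\mathcal F}_Q$ of subcubes of $Q$ satisfying $|T(f\chi_{\a Q})|\chi_Q(x)\le C\sum_{R\in{\mathcal F}_Q}\langle f\rangle_{s,\a R}\chi_R(x)$. Throughout I write $Q^*=\a Q$ and, since $s\ge q$ and $s\ge r$, I note that by H\"older's inequality the two hypotheses remain valid after replacing $\langle f\rangle_{q,\cdot}$ and $\langle f\rangle_{r,\cdot}$ by $\langle f\rangle_{s,\cdot}$.

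Fix $Q$ and set $\la_0=1/(12\cdot(2\a)^n)$; this is chosen precisely so that $\la_0\a^n=1/(3\cdot 2^{n+2})$. Choosing $c=c_{n,s}$ large enough that the weak-type $(1,1)$ of $M$ applied to $|f|^s$ yields $|\{x\in Q:M_s(f\chi_{Q^*})(x)>c\langle f\rangle_{s,Q^*}\}|\le\frac{1}{3\cdot 2^{n+2}}|Q|$, I define $\O\subset Q$ as the union of that set with $\{x\in Q:|T(f\chi_{Q^*})(x)|>\psi(\la_0)\langle f\rangle_{s,Q^*}\}$ and $\{x\in Q:{\mathcal M}^{\#}_{T,\a}(f\chi_{Q^*})(x)>\f(\la_0)\langle f\rangle_{s,Q^*}\}$. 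The two distributional hypotheses applied to the cube $Q^*$ bound each of the latter sets by $\la_0|Q^*|=\frac{1}{3\cdot 2^{n+2}}|Q|$, so $|\O|\le\frac{1}{2^{n+2}}|Q|$.

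From here the argument is essentially identical to the proof of Theorem \ref{sdp}. I would apply the local Calder\'on-Zygmund decomposition to $\chi_\O$ on $Q$ at height $\la=1/2^{n+1}$ to extract pairwise disjoint cubes $\{P_j\}\subset Q$ satisfying the analogue of (\ref{cov}) and covering $\O$ up to a null set, which immediately gives $|T(f\chi_{Q^*})(x)|\le\psi(\la_0)\langle f\rangle_{s,Q^*}$ for a.e.\ $x\in Q\setminus\cup_j P_j$. For $x\in P_j$ and any $x'\in P_j\setminus\O$, the definition of ${\mathcal M}^{\#}_{T,\a}$, combined with the outside-$\O$ control of $|T(f\chi_{Q^*})(x')|$ by $\psi(\la_0)\langle f\rangle_{s,Q^*}$ and sublinearity of $T$, yields $|T(f\chi_{Q^*\setminus P_j^*})(x)|\le C(\psi(\la_0)+\f(\la_0))\langle f\rangle_{s,Q^*}+|T(f\chi_{P_j^*})(x')|$.

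To realize the point $x'$, I apply the $\psi$-hypothesis to the cube $P_j^*=\a P_j$: the set where $|T(f\chi_{P_j^*})|>\psi(\la_0)\langle f\rangle_{s,P_j^*}$ has measure at most $\la_0|P_j^*|=\frac{1}{3\cdot 2^{n+2}}|P_j|$, and since $|P_j\cap\O|\le|P_j|/2$ there is $x'\in P_j\setminus\O$ satisfying both bounds. The condition $x'\notin\O$ forces $M_s f(x')\le c\langle f\rangle_{s,Q^*}$, hence $\langle f\rangle_{s,P_j^*}\le c\langle f\rangle_{s,Q^*}$. Combining everything produces the recursive inequality $|T(f\chi_{Q^*})|\chi_Q(x)\le C(\psi(\la_0)+\f(\la_0))\langle f\rangle_{s,Q^*}\chi_Q(x)+\sum_j|T(f\chi_{P_j^*})|\chi_{P_j}(x)$, which I iterate, using $\sum_j|P_j|\le|Q|/2$ to verify the $\frac{1}{2}$-sparseness of the resulting ${\mathcal F}_Q$; Lemma \ref{ci} then finishes the argument. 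There is no deep obstacle here: the only genuine work is bookkeeping the constant $\la_0$ so that the three pieces of $\O$ and the small extra bad set inside each $P_j$ all fit comfortably within the Calder\'on-Zygmund threshold $|\O|\le|Q|/2^{n+2}$.
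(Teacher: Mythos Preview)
Your proposal is correct and follows exactly the route the paper takes: the paper's own argument for Theorem~\ref{mpv} consists of the single observation that one should set $A=2\big(\psi(1/12\cdot(2\a)^n)+\f(1/12\cdot(2\a)^n)\big)$ in the proof of Theorem~\ref{sdp}, and your write-up simply spells out that modification in full, with the same choice $\la_0=1/(12\cdot(2\a)^n)$, the same three-piece decomposition of $\Omega$, the same Calder\'on--Zygmund stopping, and the same appeal to Lemma~\ref{ci}. The only cosmetic difference is that you keep $\psi(\la_0)$ and $\f(\la_0)$ as separate thresholds rather than packaging them into a single constant $A$; note also that the constant $c$ controlling $M_s$ should carry an $\a$-dependence, $c=c_{n,s,\a}$, since $|Q^*|=\a^n|Q|$ enters the weak-$(1,1)$ estimate.
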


Indeed, the only difference in the proof is in the definition of $A$, namely, one should define
$$A=2\Big(\psi(1/12\cdot (2\a)^{n})+\f(1/12\cdot (2\a)^{n})\Big).$$
With this choice of $A$ we have
$$|\{x\in Q: \widetilde M_{T}(f\chi_{Q^*})(x)>A\langle f\rangle_{s,Q^*}\}|\le \frac{1}{6\cdot 2^n}|Q|,$$
and hence one can bound $|\Omega|$ by $\frac{1}{2^{n+2}}|Q|$.

\begin{remark}\label{adv}
The advantage of Theorem \ref{mpv} compared to Theorem \ref{sdp} is not only in the weaker assumption on
${\mathcal M}_{T,\a}^{\#}$ but also in the fact that the sparse domination for an individual function $f$ follows
from the initial assumptions on the same function. This advantage will be used in Theorem~\ref{t1v} below.
\end{remark}

Our next remark is that the $\langle f\rangle_{p,Q}$ averages in Theorems \ref{sdp} and \ref{mpv} can be replaced
by the Orlicz averages defined for a Young function $\Phi$ by
$$
\|f\|_{\Phi,Q}=\inf\Big\{\la>0:\frac{1}{|Q|}\int_Q\Phi(|f(y)|/\la)dy\le 1\Big\}.
$$
For example, the corresponding variant of Theorem \ref{mpv} can be stated as follows.

\begin{theorem}\label{ompv} Let $\Phi$ and $\Theta$ be Young functions such that $\Theta(t)\le C\Phi(t)$ for all $t\ge t_0\ge 0$.
Let $f$ be a compactly supported function from the Orlicz space $L^{\Phi}({\mathbb R}^n)$.
Assume that $T$ is a sublinear operator satisfying the following property: there exist non-increasing functions $\psi$ and $\f$ such that for any cube $Q$,
$$
|\{x\in Q:|T(f\chi_Q)(x)|>\psi(\la)\|f\|_{\Phi,Q}\}|\le \la|Q|\quad(0<\la<1)
$$
and
$$
|\{x\in Q:{\mathcal M}_{T,\a}^{\#}(f\chi_Q)(x)>\f(\la)\|f\|_{\Theta,Q}\}|\le \la|Q|\quad(0<\la<1)
$$
for some $\a\ge 3$. Then there exists a $\frac{1}{2\cdot \a^n}$-sparse family ${\mathcal S}$ such that
$$
|Tf(x)|\le C\sum_{Q\in {\mathcal S}}\|f\|_{\Phi,Q}\chi_Q(x)
$$
for a.e. $x\in {\mathbb R}^n$, where $C=c_{n,\Phi,\Theta}\Big(\psi(1/12\cdot (2\a)^{n})+\f(1/12\cdot (2\a)^{n})\Big)$.
\end{theorem}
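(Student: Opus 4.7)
The plan is to run the proof of Theorem~\ref{mpv} essentially verbatim, replacing the $L^s$ average $\langle\cdot\rangle_{s,Q}$ throughout by the Orlicz norm $\|\cdot\|_{\Phi,Q}$ and the maximal operator $M_s$ by the Orlicz maximal operator $M_{\Phi}g(x)=\sup_{Q\ni x}\|g\|_{\Phi,Q}$. By Lemma~\ref{ci} it is enough, for each fixed cube $Q$ (with $Q^*=\a Q$), to exhibit a $\tfrac12$-sparse family $\mathcal F_Q$ of subcubes of $Q$ such that
\[
|T(f\chi_{Q^*})|\chi_Q(x)\le C\sum_{R\in\mathcal F_Q}\|f\|_{\Phi,R^*}\chi_R(x).
\]

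The preliminary observation needed is the Orlicz-norm comparison $\|g\|_{\Theta,Q}\le c_{\Phi,\Theta}\|g\|_{\Phi,Q}$, which follows from the hypothesis $\Theta(t)\le C\Phi(t)$ for $t\ge t_0$: writing $\Theta(t)\le C\Phi(t)+\Theta(t_0)$ pointwise and exploiting the convex scaling $\Theta(\la t)\le \la\Theta(t)$ for $\la\in(0,1]$ absorbs the additive constant $\Theta(t_0)$ into a multiplicative one. This comparison is what lets the $\mathcal M^{\#}_{T,\a}$ hypothesis, phrased with $\|f\|_{\Theta,Q}$, feed into an exceptional set phrased in terms of $\|f\|_{\Phi,Q^*}$. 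Concretely, set $\widetilde M_Tg=\max(|Tg|,\mathcal M^{\#}_{T,\a}g)$, $\la_0=1/(12(2\a)^n)$, and $A=2\bigl(\psi(\la_0)+c_{\Phi,\Theta}\f(\la_0)\bigr)$, and define
\[
\Omega=\Bigl\{x\in Q:\max\Bigl(\tfrac{M_{\Phi}(f\chi_{Q^*})(x)}{c},\tfrac{\widetilde M_T(f\chi_{Q^*})(x)}{A}\Bigr)>\|f\|_{\Phi,Q^*}\Bigr\},
\]
choosing $c=c_{n,\Phi}$ via a weak-type bound for $M_{\Phi}$ so that, together with the two hypotheses (each applied on the cube $Q^*$ and transferred to $Q$ at cost $\a^n$), $|\Omega|\le |Q|/2^{n+2}$.

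From here the argument mirrors that of Theorem~\ref{sdp} step by step. Apply the local Calder\'on--Zygmund decomposition to $\chi_\Omega$ on $Q$ at height $1/2^{n+1}$ to obtain pairwise disjoint $\{P_j\}\subset Q$ with $|P_j\cap\Omega|\le |P_j|/2$ and $|\Omega\setminus\bigcup_j P_j|=0$; outside $\bigcup_jP_j$ one has $|T(f\chi_{Q^*})|\le A\|f\|_{\Phi,Q^*}$. On each $P_j$, the oscillation of $T(f\chi_{Q^*\setminus P_j^*})$ across $P_j$ is controlled by $\mathcal M^{\#}_{T,\a}(f\chi_{Q^*})$ evaluated at a point of $P_j\setminus\Omega$, while the infimum of $|T(f\chi_{P_j^*})|$ over $P_j\setminus\Omega$ is controlled by the $T$-hypothesis applied to $P_j^*$ together with the Orlicz maximal bound $\|f\|_{\Phi,P_j^*}\le c\|f\|_{\Phi,Q^*}$ (available because $P_j^*\subset Q^*$ and $M_{\Phi}(f\chi_{Q^*})\le c\|f\|_{\Phi,Q^*}$ at some point of $P_j$). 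Iterating the resulting telescoping bound and invoking Lemma~\ref{ci} completes the proof. The main nontrivial point beyond bookkeeping is establishing the weak-type bound for $M_{\Phi}$ at level $\|f\|_{\Phi,Q^*}$ needed to make the Orlicz part of $\Omega$ small; this is standard for Orlicz maximal operators but has to be quoted carefully to pin down the constant $c_{n,\Phi,\Theta}$.
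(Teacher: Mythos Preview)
Your proposal is correct and follows essentially the same approach as the paper, which merely remarks that one should replace $\langle f\rangle_{s,Q}$ by $\|f\|_{\Phi,Q}$ in the proof of Theorems~\ref{sdp}/\ref{mpv}, using the comparison $\|f\|_{\Theta,Q}\le C\|f\|_{\Phi,Q}$. You supply the details the paper omits; the weak-type control of $M_{\Phi}$ you flag is indeed the only extra ingredient, and it follows from the weak $(1,1)$ bound for $M$ via the inclusion $\{M_{\Phi}g>c\la\}\subset\{M(\Phi(|g|/\la))>c\}$ with $\la=\|g\|_{\Phi,Q^*}$, so the constant there is $c_{n,\a}$ rather than $c_{n,\Phi}$.
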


Indeed, it is easy to see that $\langle f\rangle_{s,Q}$ appears in the sparse domination estimate in Theorem \ref{mpv} just because, by H\"older's inequality,
$$\max(\langle f\rangle_{q,Q}, \langle f\rangle_{r,Q})\le \langle f\rangle_{s,Q}.$$
Now, the assumption $\Theta(t)\le C\Phi(t)$ implies $\|f\|_{\Theta,Q}\le C\|f\|_{\Phi,Q}$. Therefore, replacing $\langle f\rangle_{s,Q}$ by $\|f\|_{\Phi,Q}$
in the proof of Theorems \ref{sdp}/\ref{mpv}, we obtain Theorem \ref{ompv}. For an application of Theorem \ref{ompv}, see Example~\ref{ex3} in Section 5.

We also note that Theorem \ref{mpv} can be easily extended to a multilinear case. In \cite{Li}, a multilinear extension of Theorem A was obtained.
Our multilinear variant of Theorem \ref{mpv} improves this result exactly in the same way as Theorem \ref{mpv} improves Theorem A.

Denote $\vec f=(f_1,\dots,f_m)$ and
$\vec f\chi_Q=(f_1\chi_Q,\dots,f_m\chi_Q)$. Given an operator $T(\vec f\,)$ and $\a>0$, define a multilinear analogue of the operator ${\mathcal M}_{T,\a}^{\#}$ by
$$
{\mathcal M}^{\#}_{T,\a}(\vec f\,)(x)
=\sup_{Q\ni x}\esssup_{x',x''\in Q}|\big(T(\vec f\,)-T(\vec f\chi_{\a Q})\big)(x')-\big(T(\vec f\,)-T(\vec f\chi_{\a Q})\big)(x'')|.
$$

\begin{theorem}\label{mmpv} Let $1\le q,r<\infty$ and $s=\max(q,r)$. Let $f_j, j=1,\dots,m,$ be compactly supported functions from $L^s({\mathbb R}^n)$,
and let $\vec f=(f_1,\dots, f_m)$.
Assume that $T$ is an operator satisfying the following property: there exist non-increasing functions $\psi$ and $\f$ such that for any cube $Q$,
$$
|\{x\in Q:|T(\vec f\chi_Q)(x)|>\psi(\la)\prod_{j=1}^m\langle f_j\rangle_{q,Q}\}|\le \la|Q|\quad(0<\la<1)
$$
and
$$
|\{x\in Q:{\mathcal M}_{T,\a}^{\#}(\vec f\chi_Q)(x)>\f(\la)\prod_{j=1}^m\langle f_j\rangle_{r,Q}\}|\le \la|Q|\quad(0<\la<1)
$$
for some $\a\ge 3$. Then there exists a $\frac{1}{2\cdot \a^n}$-sparse family ${\mathcal S}$ such that
$$
|T\vec f(x)|\le C\sum_{Q\in {\mathcal S}}\prod_{j=1}^m\langle f_j\rangle_{s,Q}\chi_Q(x)
$$
for a.e. $x\in {\mathbb R}^n$, where $C=c_{n,s}\Big(\psi(1/12\cdot (2\a)^{n})+\f(1/12\cdot (2\a)^{n})\Big)$.
\end{theorem}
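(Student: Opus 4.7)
The plan is to imitate the proof of Theorem \ref{mpv} almost verbatim, replacing every scalar average by the product $\prod_{j=1}^m \langle f_j\rangle_{\cdot,Q}$ and keeping track of the multilinear bookkeeping. First I would establish a multilinear version of Lemma \ref{ci}: given a localized sparse bound of the form $|T(\vec f\chi_{\alpha Q})|\chi_Q\le C\sum_{R\in {\mathcal F}_Q}\prod_{j=1}^m\langle f_j\rangle_{s,\alpha R}\chi_R$ for each member of a suitable partition, passing to a global sparse bound requires only a partition $\{Q_j\}$ of ${\mathbb R}^n$ with $\operatorname{supp}(f_j)\subset \alpha Q_j$ for all $j$ and all the $f_j$'s. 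Since the $f_j$ are compactly supported, one can take $Q_0\supset \cup_j\operatorname{supp}(f_j)$ and then iterate the annular construction from Lemma \ref{ci} unchanged, yielding a $\frac{1}{2\cdot\alpha^n}$-sparse family $\mathcal S=\{\alpha R: R\in\cup_j\mathcal F_{Q_j}\}$.

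The core work is the local sparse estimate on a fixed cube $Q$, with $Q^*=\alpha Q$. Set $\widetilde M_T(\vec f)=\max(|T(\vec f)|,{\mathcal M}^{\#}_{T,\alpha}(\vec f))$, and define the exceptional set
$$\Omega=\Big\{x\in Q: \max_{1\le j\le m}\frac{M_s(f_j\chi_{Q^*})(x)}{c\,\langle f_j\rangle_{s,Q^*}}>1\Big\}\cup \Big\{x\in Q: \frac{\widetilde M_T(\vec f\chi_{Q^*})(x)}{A\prod_j\langle f_j\rangle_{s,Q^*}}>1\Big\},$$
where the constant $A=2\big(\psi(1/12\cdot(2\alpha)^n)+\varphi(1/12\cdot(2\alpha)^n)\big)$ is chosen exactly as in Theorem \ref{mpv} and $c=c_{n,s,\alpha}$ is chosen large enough that $|\Omega|\le \frac{1}{2^{n+2}}|Q|$. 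The two hypotheses of the theorem, applied to $\vec f\chi_{Q^*}$ with $\lambda=1/(12\cdot(2\alpha)^n)$ together with $\langle f_j\chi_{Q^*}\rangle_{q,Q}\le \alpha^{n/q}\langle f_j\rangle_{q,Q^*}\le \alpha^{n/q}\langle f_j\rangle_{s,Q^*}$ (and likewise for $r$), show that the portion involving $\widetilde M_T$ indeed has measure at most $\frac{1}{6\cdot 2^n}|Q|$, while $M_s$ contributes at most $\frac{1}{12\cdot 2^n}|Q|$ for $c$ large.

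Next, I would apply the local Calder\'on--Zygmund decomposition to $\chi_\Omega$ at height $\lambda=\frac{1}{2^{n+1}}$, producing pairwise disjoint subcubes $P_j\subset Q$ satisfying (\ref{cov}). On $Q\setminus\cup_jP_j$ one has the pointwise bound $|T(\vec f\chi_{Q^*})|\le A\prod_j\langle f_j\rangle_{s,Q^*}$. On each $P_j$, for a.e. $x\in P_j$ and a.e. $x'\in P_j\setminus\Omega$ (which has measure $\ge\frac12|P_j|$), the definition of ${\mathcal M}^{\#}_{T,\alpha}$ yields
$$|T(\vec f\chi_{Q^*})(x)|\le {\mathcal M}^{\#}_{T,\alpha}(\vec f\chi_{Q^*})(x'')+|T(\vec f\chi_{P_j^*})(x')|+|T(\vec f\chi_{Q^*})(x')|,$$
where the first two terms are controlled by $3A\prod_j\langle f_j\rangle_{s,Q^*}$ using the membership $x',x''\in P_j\setminus\Omega$, and $|T(\vec f\chi_{P_j^*})(x')|$ is dominated using the $W_q$-type hypothesis applied on $P_j^*$: the bad set for $T(\vec f\chi_{P_j^*})$ on $P_j$ has measure at most $\frac{1}{6\cdot2^n}|P_j|$, hence there exists $x'\in P_j\setminus\Omega$ at which $|T(\vec f\chi_{P_j^*})(x')|\le A\prod_j\langle f_j\rangle_{s,P_j^*}\le cA\prod_j\langle f_j\rangle_{s,Q^*}$ by the $M_s$ control in $\Omega$.

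Combining these ingredients exactly as in (\ref{sp}) and iterating produces a $\frac12$-sparse family ${\mathcal F}_Q$ of subcubes of $Q$ with $|T(\vec f\chi_{Q^*})(x)|\le C\sum_{R\in\mathcal F_Q}\prod_{j=1}^m\langle f_j\rangle_{s,R^*}\chi_R(x)$, and the multilinear Lemma \ref{ci} finishes the proof. The only genuinely new point over the linear case is arranging the single exceptional set $\Omega$ to simultaneously control all $m$ maximal functions $M_s(f_j\chi_{Q^*})$ together with $\widetilde M_T(\vec f\chi_{Q^*})$; this is the main technical obstacle, but it is handled just by budgeting the small constant $\frac{1}{12\cdot(2\alpha)^n}$ in the theorem's hypotheses across $m+1$ sets of the form $\{\cdots>\text{const}\}$, which is why one may take the same final constant $c_{n,s}(\psi+\varphi)$ in the multilinear case.
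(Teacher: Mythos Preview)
Your overall strategy is correct and matches the paper's: the authors simply note that one should replace $M_sf$ by the multilinear maximal function $M_s(\vec f\,)(x)=\sup_{Q\ni x}\prod_{j=1}^m\langle f_j\rangle_{s,Q}$ and replace $T(f\chi_{Q^*\setminus P_j^*})$ by the difference $T(\vec f\chi_{Q^*})-T(\vec f\chi_{P_j^*})$, after which the argument of Theorem~\ref{mpv} goes through verbatim. Your use of the $m$ individual maximal functions $M_s(f_j\chi_{Q^*})$ in place of $M_s(\vec f\,)$ is a harmless variant (it only affects how the constant depends on $m$).

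There is, however, a slip in your key displayed inequality. As written,
\[
|T(\vec f\chi_{Q^*})(x)|\le {\mathcal M}^{\#}_{T,\alpha}(\vec f\chi_{Q^*})(x'')+|T(\vec f\chi_{P_j^*})(x')|+|T(\vec f\chi_{Q^*})(x')|
\]
is false: the right-hand side is missing the term $|T(\vec f\chi_{P_j^*})(x)|$, without which there is nothing to iterate. What the definition of ${\mathcal M}^{\#}_{T,\alpha}$ actually gives, for $x,x'\in P_j$ and any $x''\in P_j$, is
\[
\big|\big(T(\vec f\chi_{Q^*})-T(\vec f\chi_{P_j^*})\big)(x)\big|
\le {\mathcal M}^{\#}_{T,\alpha}(\vec f\chi_{Q^*})(x'')+|T(\vec f\chi_{Q^*})(x')|+|T(\vec f\chi_{P_j^*})(x')|,
\]
so that
\[
|T(\vec f\chi_{Q^*})(x)|\le (2+c)A\prod_{j=1}^m\langle f_j\rangle_{s,Q^*}+|T(\vec f\chi_{P_j^*})(x)|,
\]
which is the multilinear analogue of (\ref{sp}) and is what one iterates. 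Once this is corrected, your sketch is complete and coincides with the paper's proof.
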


We point out the necessary changes in the proof compared to the proof of Theorem \ref{mpv}. First, instead of $M_sf$ one should consider
$$M_s(\vec f\,)(x)=\sup_{Q\ni x}\prod_{j=1}^m\langle f_j\rangle_{s,Q}.$$
Second, $T(f\chi_{Q^*\setminus P_j^*})$ should be replaced by $T(\vec f\chi_{Q^*})-T(\vec f\chi_{P_j^*}).$
The rest of the proof is identically the same.

Note that in Theorem \ref{mmpv}, similarly to the corresponding result in~\cite{Li}, we do not assume that $T$ is multilinear (or multi(sub)linear), which is in contrast to the statement of its
linear analogue, Theorem~\ref{mpv}. The explanation is in the way we defined ${\mathcal M}_{T,\a}^{\#}$ in the linear and multilinear cases. The only place where the sublinearity of $T$ in Theorems \ref{sdp}
and \ref{mpv} was used is in the estimate
$$|T(f\chi_{Q^*\setminus P_j^*})|\le |T(f\chi_{Q^*})|+|T(f\chi_{P_j^*})|.$$
Having here $T(f\chi_{Q^*})-T(f\chi_{P_j^*})$ instead of $|T(f\chi_{Q^*\setminus P_j^*})|$, this estimate would hold trivially without any assumption on $T$.
Thus, defining ${\mathcal M}_{T,\a}^{\#}$ in the linear case in analogy with its multilinear analogue, one can state Theorems \ref{sdp} and \ref{mpv} for arbitrary $T$.

\section{A sparse $T1$-type theorem}
Consider a class of integral operators represented as
\begin{equation}\label{repr}
Tf(x)=\int_{{\mathbb R}^n}K(x,y)f(y)dy\quad\text{for all}\,\,x\not\in \text{supp}\,f.
\end{equation}

We say that $K$ satisfies the $L^r$-H\"ormander condition, $1\le r\le \infty$, if
\begin{eqnarray*}
\sup_Q\sup_{x,x'\in\frac{1}{2}Q}\sum_{k=1}^{\infty}|2^kQ|^{\frac{1}{r'}}
\|K(x,\cdot)-K(x',\cdot)\|_{L^{r}(2^kQ\setminus 2^{k-1}Q)}<\infty.
\end{eqnarray*}
Denote by $H_r$ the class of kernels satisfying the $L^r$-H\"ormander condition. It is easy to see that $H_1$ is just the classical H\"ormander condition, and
that $H_r\subset H_s$ if $r>s$.

Let $T^*$ denote the transpose of $T$, which is associated to the kernel $K^*(x,y)=K(y,x)$.
It is well known (see, e.g., \cite[p. 99]{D}) that if $T$ is $L^2$ bounded, represented by (\ref{repr}) for any $f\in L^2$ and if $K,K^*\in H_1$, then $T$ is of weak type $(1,1)$
and is bounded on $L^p$ for every $1<p<\infty$.

On the other hand, many results in the theory of singular integrals hold under stronger assumptions on $K$. Recall that $K$ is called standard kernel if it satisfies the size condition
$|K(x,y)|\le \frac{C}{|x-y|^n}$ for $x\not=y$ and both $K$ and $K^*$ satisfy the regularity condition
\begin{equation}\label{st}
|K(x,y)-K(x',y)|\le C\frac{|x-x'|^{\d}}{|x-y|^{n+\d}}\quad(|x-x'|<|x-y|/2).
\end{equation}

The $T1$ theorem \cite{DJ} in one of its equivalent forms asserts that if $K$ is standard, then $T$ is $L^2$ bounded if
and only if there exists $C>0$ such that for any cube $Q$,
\begin{equation}\label{t1}
\int_Q|T\chi_Q|dx\le C|Q|\quad\text{and}\quad \int_Q|T^*\chi_Q|dx\le C|Q|.
\end{equation}

In \cite{LM}, a ``sparse" proof of the $T1$ theorem was given, namely the sparse domination (in the dual form) for $T$ was obtained
assuming that $K$ is standard and $T$ satisfies (\ref{t1}).

It is still unknown what are the minimal regularity conditions on~$K$ for which the $T1$ theorem holds.
The sharpest known sufficient condition for the $T1$ theorem is (\ref{smooth}) for $K$ and $K^*$ with
$$\int_0^1\o(t)\Big(1+\log\frac{1}{t}\Big)^{1/2}\frac{dt}{t}<\infty$$
(see \cite{HT} for the corresponding discussion). In particular, it is unknown whether this condition can be relaxed to the classical Dini condition.

Similarly, one can ask about the minimal assumptions on $T$ yielding the pointwise sparse domination.
Our result in this direction is the following.

We assume that $K:{\mathbb R}^n\times {\mathbb R}^n\setminus \{(x,x):x\in {\mathbb R}^n\}$ is real valued, and that $T$ represented by (\ref{repr}) is properly defined on the space
$L^{\infty}_c$ of bounded functions with compact support.

\begin{theorem}\label{t1v}
Assume that $K\in H_r$ for some $1<r\le \infty$ and that $K^*\in H_1$. Suppose that there exists $C>0$ such that for every cube $Q$ and every measurable subset $E\subset Q$,
\begin{equation}\label{ncon}
\int_Q|T^*\chi_E|dx\le C|Q|.
\end{equation}
Then for every $f\in L^{\infty}_c$, there exists a $\frac{1}{2\cdot 3^n}$-sparse family ${\mathcal S}$ such that for a.e. $x\in {\mathbb R}^n$,
$$|Tf(x)|\le C(n,T)\sum_{Q\in {\mathcal S}}\langle f\rangle_{r',Q}\chi_Q(x).$$
\end{theorem}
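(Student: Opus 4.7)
The plan is to apply Theorem~\ref{mpv} with $q=r'$, with the $r$ of that theorem also taken equal to $r'$ (so $s=r'$), and with the dilation parameter $\a=3$; this delivers the sparseness constant $\frac{1}{2\cdot 3^n}$ in the conclusion. For the given $f\in L^{\infty}_c$ and every cube $Q$ we must verify two estimates: a $W_{r'}$-type bound on $T(f\chi_Q)$, and an analogous weak-type bound on ${\mathcal M}^{\#}_{T,3}(f\chi_Q)$, both expressed in terms of $\langle f\rangle_{r',Q}$. The second is routine: writing $(3R)^c$ as the disjoint union of dyadic annuli $2^{k+1}R\setminus 2^kR$, applying H\"older with exponents $r$ and $r'$ on each annulus, and telescoping using $K\in H_r$ gives the pointwise majorization ${\mathcal M}^{\#}_{T,3}(g)(x)\le C_n[K]_{H_r}M_{r'}g(x)$ for any $g$; applied to $g=f\chi_Q$ and combined with the weak-type $(r',r')$ of $M_{r'}$, this supplies the bound with $\f(\la)\sim \la^{-1/r'}$.

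The $W_{r'}$ condition is where the real work lies. \emph{First}, we upgrade the testing hypothesis~(\ref{ncon}) to a local $L^{\infty}$-to-$L^{1}$ bound on~$T$: by the layer-cake formula, (\ref{ncon}) already implies $\int_Q|T^{*}g|\,dx\le 2C|Q|$ for every $g$ with $|g|\le\chi_Q$, and then the duality identity $\int T\phi\cdot g\,dx=\int \phi\cdot T^{*}g\,dx$ (using that $\text{supp}\,\phi\subset Q$) yields
\[
\int_Q|T\phi|\,dx\le 2C\,\|\phi\|_{\infty}\,|Q|
\]
for every bounded $\phi$ supported in $Q$. \emph{Second}, we run the classical Calder\'on-Zygmund split of $f$ on $Q$ at height $c_{n,r}\la^{-1/r'}\langle f\rangle_{r',Q}$, obtaining disjoint cubes $\{Q_j\}\subset Q$ with $\sum|Q_j|\le \tfrac12\la|Q|$, with $|f|$ controlled by that height off $\bigcup Q_j$, and with a splitting $f\chi_Q=g+\sum_j b_j$ in which $\|g\|_{\infty}\lesssim \la^{-1/r'}\langle f\rangle_{r',Q}$ and each $b_j$ is mean zero and supported in $Q_j$. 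The good part is handled by the local bound above combined with Chebyshev. For the bad part, the mean-zero cancellation together with the \emph{classical} H\"ormander condition---which in the paper's notation is exactly $K^{*}\in H_1$---gives $\int_{(2Q_j)^c}|Tb_j|\,dx\le [K^{*}]_{H_1}\|b_j\|_1$, while H\"older on $Q$ yields $\sum\|b_j\|_1\le 2\langle f\rangle_{r',Q}|Q|$; Chebyshev off the exceptional set $\bigcup 2Q_j$ (of measure $\le \tfrac12\la|Q|$) then assembles the $W_{r'}$ bound with some $\psi(\la)$ depending only on $n$, $r$, the testing constant in~(\ref{ncon}), and $[K^{*}]_{H_1}$.

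Feeding these $\psi$ and $\f$ into Theorem~\ref{mpv} (in the individual-function formulation emphasized in Remark~\ref{adv}) with $\a=3$ and $s=r'$ completes the proof, with $C(n,T)$ assembled from $n$, $r$, $[K]_{H_r}$, $[K^{*}]_{H_1}$, and the testing constant. The only genuinely non-routine point is the very first step of the $W_{r'}$ verification: converting the a priori weak testing condition~(\ref{ncon}) on~$T^{*}$ into a local $L^{\infty}$-to-$L^{1}$ bound on~$T$ without any recourse to $L^{2}$ boundedness; once that upgrade is in hand, the remainder is the textbook Calder\'on-Zygmund$+$H\"ormander weak-type argument.
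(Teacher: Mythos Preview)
Your proposal is correct and follows essentially the same route as the paper: convert (\ref{ncon}) by duality into a local $L^{\infty}\!\to\! L^{1}$ bound for $T$, feed this together with $K^{*}\in H_{1}$ into a Calder\'on--Zygmund argument to get a $W$-type level-set estimate, use $K\in H_{r}$ to bound ${\mathcal M}^{\#}_{T,3}$ by $M_{r'}$, and conclude via Theorem~\ref{mpv} with $\a=3$. The only cosmetic difference is that the paper isolates the Calder\'on--Zygmund step as a separate lemma (Lemma~\ref{w1pr}) run at $L^{1}$ height to produce a $W_{1}$ bound $\psi(\la)\sim\la^{-2}$ and then invokes Theorem~\ref{mpv} with $q=1$, whereas you run the decomposition at an $L^{r'}$-adapted height and land directly on a $W_{r'}$ bound before invoking Theorem~\ref{mpv} with $q=r'$; since $s=\max(q,r')=r'$ either way, the two variants are equivalent for the present purpose.
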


We collect several standard facts. First, the assumption $K\in H_r$ along with H\"older's inequality implies
that for all $x\in {\mathbb R}^n$,
\begin{equation}\label{ff}
{\mathcal M}_{T,3}^{\#}f(x)\le CM_{r'}f(x).
\end{equation}
Second, the assumption $K^*\in H_1$ implies  that
for every cube $Q$ and any bounded function supported in $Q$ with $\int_Qf=0$,
\begin{equation}\label{at}
\int_{{\mathbb R}^n\setminus 2Q}|Tf|dx\le C\int_Q|f|dx.
\end{equation}

The proof of the following lemma is almost the same as the standard proof of the weak type $(1,1)$ of $T$.

\begin{lemma}\label{w1pr} Let $K^*\in H_1$. Assume that there exist $A,\d>0$ such that for every cube $Q$ and any $f\in L^{\infty}(Q)$,
\begin{equation}\label{con}
|\{x\in Q:|T(f\chi_Q)(x)|>\a\}|\le A\left(\frac{\|f\|_{L^{\infty}(Q)}}{\a}\right)^{\d}|Q|\quad(\a>0).
\end{equation}
Then there is $C>1$ such that for any $f\in L^{\infty}_c$ and for every cube $Q$,
$$
|\{x\in Q:|T(f\chi_Q)(x)|>\a|f|_Q\}|\le\frac{C}{\a^{\frac{\d}{1+\d}}}|Q|\quad(\a>0).
$$
\end{lemma}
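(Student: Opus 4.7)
The plan is to mimic the classical Calder\'on--Zygmund proof of weak type $(1,1)$, using a local Calder\'on--Zygmund decomposition of $f$ on $Q$ at a height that is tuned to balance the good and bad part estimates. We may assume $\alpha\ge 1$, since for $\alpha<1$ the inequality is trivial after enlarging $C$. I would set
$$\la:=|f|_Q\,\alpha^{\delta/(1+\delta)}$$
and apply the local Calder\'on--Zygmund decomposition of $f$ on $Q$ at height $\la$, producing pairwise disjoint subcubes $\{Q_j\}\subset Q$ with $\la<|f|_{Q_j}\le 2^n\la$, with $|f|\le \la$ a.e.\ on $Q\setminus\bigcup_j Q_j$, and with $\sum_j|Q_j|\le |f|_Q|Q|/\la$. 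Splitting $f\chi_Q=g+b$ in the usual fashion ($g$ equal to $f_{Q_j}$ on $Q_j$ and to $f$ off $\bigcup_j Q_j$; $b=\sum_j b_j$ with $b_j=(f-f_{Q_j})\chi_{Q_j}$), one has $\|g\|_{L^\infty(Q)}\le 2^n\la$ and each $b_j$ has mean zero, is supported in $Q_j$, and satisfies $\int_{Q_j}|b_j|\le 2\int_{Q_j}|f|$.

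For the good part I would apply hypothesis (\ref{con}) to $g$ (viewed as the restriction to $Q$ of its zero extension to $\mathbb R^n$) to get
$$\bigl|\{x\in Q:|Tg(x)|>\tfrac12\alpha|f|_Q\}\bigr|\le A\Bigl(\frac{2^{n+1}\la}{\alpha|f|_Q}\Bigr)^{\!\delta}|Q|.$$
For the bad part I would discard $\bigcup_j 2Q_j$, whose measure is at most $2^n|f|_Q|Q|/\la$, and on the remainder use the consequence (\ref{at}) of $K^*\in H_1$, applied to each $b_j$ on $Q_j$, to obtain
$$\int_{Q\setminus\bigcup_j 2Q_j}|Tb|\,dx\le \sum_j\int_{\mathbb R^n\setminus 2Q_j}|Tb_j|\,dx\le C\sum_j\int_{Q_j}|b_j|\,dx\le 2C|f|_Q|Q|,$$
and then apply Chebyshev, producing an additional term of size at most $4C|Q|/\alpha$.

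Collecting the three contributions and substituting the choice of $\la$, the first two become constant multiples of $\alpha^{-\delta/(1+\delta)}|Q|$, while the third term, a multiple of $|Q|/\alpha$, is dominated by them because $\delta/(1+\delta)\le 1$ and $\alpha\ge 1$. This yields the asserted inequality, the exponent $\delta/(1+\delta)$ emerging as the balance point of the good-part estimate $(\la/(\alpha|f|_Q))^{\delta}$ against the bad-part estimate $|f|_Q/\la$. The argument is a routine adaptation of the classical template and has no substantive obstacle; the one cautionary point, and the only reason for the reduction $\alpha\ge 1$, is ensuring $\la\ge |f|_Q$ so that the local Calder\'on--Zygmund decomposition is nontrivial.
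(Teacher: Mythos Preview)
Your proof is correct and follows essentially the same approach as the paper's own argument: a local Calder\'on--Zygmund decomposition at height $\lambda=|f|_Q\,\alpha^{\delta/(1+\delta)}$, hypothesis (\ref{con}) applied to the good part, and (\ref{at}) plus Chebyshev for the bad part. The only cosmetic difference is that the paper normalizes $|f|_Q=1$ at the outset, whereas you carry $|f|_Q$ through $\lambda$; after this normalization the two proofs are line-by-line the same.
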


\begin{proof}
Fix a cube $Q$. By homogeneity, one can assume that $|f|_Q=1$. Suppose also that $\a>1$ since otherwise the statement is trivial.

By the local Calder\'on-Zygmund decomposition, there exists a family of pairwise disjoint cubes $\{Q_j\}\subset Q$ such that
$$\a^{\frac{\d}{1+\d}}<|f|_{Q_j}\le 2^n\a^{\frac{\d}{1+\d}}$$
and $|f|\le \a^{\frac{\d}{1+\d}}$ a.e. on $Q\setminus \cup_jQ_j$.
Set $b=\sum_jb_j$, where $b_j=(f-f_{Q_j})\chi_{Q_j}$ and let $g=f-b$. Then $\|g\|_{L^{\infty}(Q)}\le 2^n\a^{\frac{\d}{1+\d}}$.

Applying (\ref{con}) yields
$$
|\{x\in Q:|T(g\chi_Q)(x)|>\a/2\}|\le \frac{2^{(n+1)\d}A}{\a^{\frac{\d}{1+\d}}}|Q|
$$

Next, by (\ref{at}),
$$\int_{{\mathbb R}^n\setminus 2Q_j}|T(b_j)|dx\le C\int_{Q_j}|b_j(y)|dy,$$
and, therefore,
\begin{eqnarray*}
\int_{{\mathbb R}^n\setminus\cup_j2Q_j}|T(b)|dx&\le&\sum_j\int_{{\mathbb R}^n\setminus 2Q_j}|T(b_j)|dx\\
&\le& C\sum_j\int_{Q_j}|b_j|dx\le 2C|Q|,
\end{eqnarray*}
which implies
\begin{eqnarray*}
&&|\{x\in Q:|T(b\chi_Q)(x)|>\a/2\}|\le |\cup_j2Q_j|\\
&&+\frac{2}{\a}\int_{{\mathbb R}^n\setminus\cup_j2Q_j}|T(b)|dx\le \left(\frac{2^n}{\a^{\frac{\d}{1+\d}}}+\frac{2C}{\a}\right)|Q|.
\end{eqnarray*}
Combining this with the above estimate for $T(g\chi_Q)$ yields
$$
|\{x\in Q:|T(f\chi_Q)(x)|>\a\}|\le \left(\frac{2^{(n+1)\d}A+2^n}{\a^{\frac{\d}{1+\d}}}+\frac{2C}{\a}\right)|Q|,
$$
which completes the proof.
\end{proof}

\begin{proof}[Proof of Theorem \ref{t1v}]
Condition (\ref{ncon}) implies that for every cube $Q$ and any $f\in L^{\infty}(Q)$,
$$\int_{Q}|T(f\chi_Q)|dx=\int_QfT^*\big((\text{sign}\,Tf)\chi_Q\big)dx\le 2C\|f\|_{L^{\infty}(Q)}|Q|.$$
Therefore, by Lemma \ref{w1pr}, there is $C'>1$ such that for any $f\in L^{\infty}_c$ and for every cube $Q$,
$$
|\{x\in Q:|T(f\chi_Q)(x)|>\a|f|_Q\}|\le\frac{C'}{\a^{1/2}}|Q|\quad(\a>0).
$$

From this and from (\ref{ff}), by the weak type $(r',r')$ of $M_{r'}$,
we obtain that both conditions of Theorem~\ref{mpv} are satisfied for $q=1$ and $r'$ instead of $r$ for every $f\in L^{\infty}_c$  with corresponding functions $\psi$ and $\f$ independent of $f$.
Applying Theorem \ref{mpv} completes the proof.
\end{proof}

Note that for every $\eta$-sparse family ${\mathcal S}$,
\begin{equation}\label{spbo}
\Big\|\sum_{Q\in {\mathcal S}}\langle f\rangle_{r,Q}\chi_Q\Big\|_{L^p}\le C_{n,p,r,\eta}\|f\|_{L^p}\quad(1\le r<p<\infty)
\end{equation}
(see, e.g., \cite[Lemma 4.5]{Le2}). This along with Theorem \ref{t1v} easily implies the following.

\begin{cor}\label{t1t}
Assume that $K\in H_r$ for some $1<r\le \infty$ and that $K^*\in H_1$. Then $T$ has a bounded extension that maps $L^2({\mathbb R}^n)$ to itself if and only if condition (\ref{ncon}) holds.
\end{cor}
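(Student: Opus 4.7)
The forward direction is a one-line duality computation: if $T$ extends boundedly to $L^2$, so does $T^*$, and Cauchy--Schwarz gives, for any measurable $E\subset Q$,
$$\int_Q|T^*\chi_E|\,dx\le |Q|^{1/2}\|T^*\chi_E\|_{L^2}\le \|T^*\|_{L^2\to L^2}|Q|^{1/2}|E|^{1/2}\le \|T^*\|_{L^2\to L^2}|Q|.$$

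For the reverse direction, my plan is to chain together Theorem~\ref{t1v}, the sparse $L^p$ bound \eqref{spbo}, and the classical Calder\'on--Zygmund weak-$(1,1)$ argument upgraded via~\eqref{at}. Assuming \eqref{ncon}, Theorem~\ref{t1v} yields, for each $f\in L^\infty_c$, a sparse family ${\mathcal S}$ with $|Tf|\le C\sum_{Q\in\mathcal S}\langle f\rangle_{r',Q}\chi_Q$; combined with \eqref{spbo} this gives an a priori bound $\|Tf\|_{L^p}\le C_p\|f\|_{L^p}$ for every $p>r'$ and every $f\in L^\infty_c$. Since $r>1$ forces $r'<\infty$, we may fix $p_0>\max(r',2)$, obtaining a strong $(p_0,p_0)$ estimate for~$T$ on $L^\infty_c$.

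I would then upgrade this to weak type $(1,1)$ via the classical Calder\'on--Zygmund argument, using \eqref{at} (already extracted from $K^*\in H_1$) in place of the usual smoothness estimate. For $f\in L^\infty_c$ and $\alpha>0$, the standard CZ decomposition produces $f=g+b$ with $b=\sum_j b_j$ supported on pairwise disjoint cubes $Q_j$ of vanishing mean satisfying $\|b_j\|_{L^1}\le C\alpha|Q_j|$ and $\sum_j|Q_j|\le C\|f\|_{L^1}/\alpha$, together with $\|g\|_{L^{p_0}}^{p_0}\le C\alpha^{p_0-1}\|f\|_{L^1}$. Chebyshev plus the strong $(p_0,p_0)$ estimate controls~$g$, while \eqref{at} applied term-by-term gives $\int_{{\mathbb R}^n\setminus 2Q_j}|Tb_j|\,dx\le C\|b_j\|_{L^1}\le C\alpha|Q_j|$, which summed with $|\bigcup_j 2Q_j|\le C\|f\|_{L^1}/\alpha$ yields $T:L^1\to L^{1,\infty}$. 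Marcinkiewicz interpolation between weak $(1,1)$ and strong $(p_0,p_0)$ then delivers strong $(p,p)$ for every $1<p<p_0$; since $p_0>2$ this includes $p=2$, and density of $L^\infty_c$ in $L^2$ supplies the required bounded extension.

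The main subtlety is precisely the regime $r\le 2$ (equivalently $r'\ge 2$), where the sparse bound by itself fails to reach $L^2$, so one genuinely needs this second use of $K^*\in H_1$ via~\eqref{at} to close the argument. For $r>2$ no such detour is necessary: duality against the sparse form, combined with the $L^2$-boundedness of $M_{r'}$ and of the Hardy--Littlewood maximal operator, already yields $\|Tf\|_{L^2}\le C\|f\|_{L^2}$ directly.
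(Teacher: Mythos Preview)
Your proposal is correct and follows essentially the same route as the paper: the paper also dismisses the forward direction as obvious, then for the converse applies Theorem~\ref{t1v} together with \eqref{spbo} to obtain $\|Tf\|_{L^p}\le C\|f\|_{L^p}$ for $p>r'$, invokes \eqref{at} (i.e.\ $K^*\in H_1$) to deduce weak type $(1,1)$ via the classical Calder\'on--Zygmund argument, and interpolates to $p=2$. Your write-up simply spells out the CZ decomposition that the paper compresses into a citation, and your closing remark about the regime $r\le 2$ correctly identifies why the weak $(1,1)$ step is not redundant.
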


\begin{proof}
The necessity part of this statement is obvious. Thus, we only need to show the sufficiency part.

By Theorem \ref{t1v} and by (\ref{spbo}), for any $p>r'$ and for all $f\in L^{\infty}_c$, 
\begin{equation}\label{ifp}
\|Tf\|_{L^p}\le c_{n,p,r}\|f\|_{L^p}.
\end{equation}
This along with (\ref{at}) implies the weak type $(1,1)$ property (restricted to $f\in L^{\infty}_c$)
$$\|Tf\|_{L^{1,\infty}}\le C\|f\|_{L^1}$$
(see, e.g., \cite{D} for this fact), and therefore, by interpolation, 
$$\|Tf\|_{L^2}\le C\|f\|_{L^2}\quad(f\in L^{\infty}_c),$$
which implies that $T$ can be extended continuously to a bounded mapping on~$L^2$.
\end{proof}

Observe that condition (\ref{ncon}) can be written in an equivalent and more symmetric form as follows: there exists $C>0$ such that for every cube $Q$ and any measurable subsets $E,F\subset Q$,
$$
|\langle T\chi_E,\chi_F\rangle|\le C|Q|.
$$

Having in mind Corollary \ref{t1t}, the question about the minimal regularity assumptions yielding the $T1$ theorem can be rephrased as follows:
what are the minimal regularity assumptions on $K$ for which the $T1$ conditions (\ref{t1}) imply (\ref{ncon})?

One can also ask whether the assumption $K\in H_r, r>1,$ in Corollary~\ref{t1t} can be further improved to the minimal assumption $K\in H_1$.

\section{Examples}
In this section we give several examples of operators admitting the pointwise sparse domination.
Note that most of the sparse bounds mentioned below are known. But here we provide a unified and
simplified approach to these results based on Theorem \ref{sdp} and its variants.
First, we mention the following corollary, which follows immediately from Theorem \ref{sdp}.

\begin{cor}\label{icor} Let $1\le q,r<\infty$.
Let $T$ be a sublinear operator of weak type $(q,q)$, and suppose that for some $\a\ge 3$ and for a.e. $x\in {\mathbb R}^n$,
$$
{\mathcal M}^{\#}_{T,\a}f(x)\le KM_rf(x).
$$
Let $s=\max(q,r)$.
Then, for every compactly supported $f\in L^s({\mathbb R}^n)$, there exists a $\frac{1}{2\cdot \a^n}$-sparse family ${\mathcal S}$ such that
\begin{equation}\label{sd}
|Tf(x)|\le C\sum_{Q\in {\mathcal S}}\langle f\rangle_{s,Q}\chi_Q(x)
\end{equation}
for a.e. $x\in {\mathbb R}^n$, where $C=c_{n,r,s,\a}(\|T\|_{L^q\to L^{q,\infty}}+K)$.
\end{cor}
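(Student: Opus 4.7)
The plan is to verify that the two hypotheses of Theorem \ref{sdp} hold under the assumptions of the corollary, and then simply invoke that theorem. This is essentially a bookkeeping argument; there is no new conceptual ingredient required.

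First I would check the $W_q$ property of $T$. Since $T$ is of weak type $(q,q)$, for any cube $Q$ and any $f\in L^q(Q)$, Chebyshev's inequality in the weak-$L^q$ norm gives
$$|\{x\in Q:|T(f\chi_Q)(x)|>\la^{-1/q}\|T\|_{L^q\to L^{q,\infty}}\langle f\rangle_{q,Q}\}|\le \la|Q|$$
for $0<\la<1$, as already noted right after the definition of $W_q$ in the introduction. Thus the $W_q$ property holds with $\psi_{T,q}(\la)=\|T\|_{L^q\to L^{q,\infty}}\la^{-1/q}$.

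Next I would verify that $\mathcal{M}_{T,\a}^{\#}$ is of weak type $(r,r)$. From the pointwise hypothesis $\mathcal{M}_{T,\a}^{\#}f(x)\le K M_rf(x)$ and the identity $M_rf=M(|f|^r)^{1/r}$, the weak type $(1,1)$ of the Hardy--Littlewood maximal operator $M$ (applied to $|f|^r$) yields
$$\|\mathcal{M}_{T,\a}^{\#}\|_{L^r\to L^{r,\infty}}\le K\|M_r\|_{L^r\to L^{r,\infty}}\le c_{n,r}K.$$
In particular $\mathcal{M}_{T,\a}^{\#}$ is of weak type $(r,r)$ with a constant controlled by $K$.

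Having verified both hypotheses, I would apply Theorem \ref{sdp} with the given $q$, $r$, $\a\ge 3$, and $s=\max(q,r)$. The theorem produces a $\frac{1}{2\cdot\a^n}$-sparse family $\mathcal{S}$ and the pointwise sparse domination \eqref{sd}, with constant
$$C=c_{n,r,s,\a}\bigl(\psi_{T,q}(1/(12\cdot(2\a)^n))+\|\mathcal{M}_{T,\a}^{\#}\|_{L^r\to L^{r,\infty}}\bigr)\le c_{n,r,s,\a}'\bigl(\|T\|_{L^q\to L^{q,\infty}}+K\bigr),$$
where the last inequality absorbs the $\la^{-1/q}$ factor from $\psi_{T,q}$ and the $c_{n,r}$ factor from the weak-type bound of $M_r$ into the dimensional constant. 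There is no real obstacle here: both hypotheses of Theorem \ref{sdp} are immediate consequences of the stated assumptions, so the corollary reduces to a direct citation of that theorem together with the standard weak-type estimate for $M_r$.
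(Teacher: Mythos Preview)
Your argument is correct and matches the paper's approach exactly: the paper simply states that the corollary ``follows immediately from Theorem~\ref{sdp},'' and you have spelled out precisely the two routine verifications (the $W_q$ property from weak type $(q,q)$, and the weak type $(r,r)$ of ${\mathcal M}^{\#}_{T,\a}$ from the pointwise bound by $M_r$) that make this citation work.
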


\begin{example}\label{ex1}
Consider a class of integral operators represented by (\ref{repr}) with $K\in H_r, 1<r\le \infty$.
Then, as it was mentioned above, (\ref{ff}) holds.

Therefore, assuming additionally that $T$ is of weak type $(q,q)$,
by Corollary \ref{icor} we obtain that (\ref{sd}) holds with $s=\max(q,r')$. This result in a slightly different form can be found in~\cite{Li}.
\end{example}

\begin{example}\label{ex2}
Let ${\mathcal F}=\{\phi_{\a}\}_{\a\in A}$ be a family of real-valued measurable functions indexed by some set $A$, and let $T$ be an operator
represented by (\ref{repr}). Define the maximally modulated operator $T^{\mathcal F}$ by
$$T^{\mathcal F}f(x)=\sup_{\a\in A}|T({\mathcal M}^{\phi_{\a}}f)(x)|,$$
where ${\mathcal M}^{\phi_{\a}}f(x)={\rm e}^{2\pi i\phi_{\a}(x)}f(x)$.

Assume that $K\in H_r, 1<r\le \infty,$. Then (\ref{ff}) holds for $T^{\mathcal F}$ instead of $T$ with essentially the same proof.
Assuming additionally that $T^{\mathcal F}$ is of weak type $(q,q)$, we obtain (\ref{sd}) for $T^{\mathcal F}$ with $s=\max(q,r')$. The corresponding result can be found in \cite{B}.
\end{example}

\begin{example}\label{ex3} As a particular case of the previous example, consider the Carleson operator ${\mathcal C}$ defined by
$${\mathcal C}(f)(x)=\sup_{\xi\in {\mathbb R}}|H({\mathcal M}^{\xi}f)(x)|,$$
where $H$ is the Hilbert transform, and ${\mathcal M}^{\xi}f(x)={\rm e}^{2\pi i\xi x}f(x)$.

In this case $K\in H_{\infty}$, and therefore ${\mathcal M}^{\#}_{{\mathcal C},3}$ is of weak type $(1,1)$.

Set $\Phi(t)=t\log({\rm e}+t)\log\log\log({\rm e}^{{\rm e}^{\rm e}}+t).$ It was shown in \cite[Th.~5.1]{GMS} that for every interval $I\subset {\mathbb R}$,
$$\|{\mathcal C}(f\chi_I)\|_{L^{1,\infty}(I)}\le C|I|\|f\|_{\Phi,I}$$
(this represents an elaborated version of Antonov's theorem~\cite{A} on a.e. convergence of Fourier series for $f\in L\log L\log\log\log L$).

Therefore, by Theorem \ref{ompv}, for every compactly supported $f\in L^{\Phi}({\mathbb R})$, there exists a $\frac{1}{6}$-sparse family ${\mathcal S}$
such that for a.e. $x\in {\mathbb R}$,
$${\mathcal C}(f)(x)\le C\sum_{I\in {\mathcal S}}\|f\|_{\Phi,I}\chi_I(x).$$
\end{example}

\begin{example}\label{ex4}
Recall that a smooth function $a(x,\xi)$ defined on ${\mathbb R}^n\times {\mathbb R}^n$ belongs to the class $S_{\rho,\d}^m$ if
$$|\pa_x^{\a}\pa_{\xi}^{\b}a(x,\xi)|\le C_{\a,\b}(1+|\xi|)^{m-\rho|\b|+\d|\a|}$$
for all multi-indices $\a,\b$, where $m\in {\mathbb R}$ and $0\le \rho,\d\le 1$.

Given $a\in S_{\rho,\d}^m$, the pseudodifferential operator $T_a$ is defined by
$$T_af(x)=\int_{{\mathbb R}^n}a(x,\xi)\widehat f(\xi)e^{2\pi ix\cdot\xi}d\xi.$$

Assume that $a\in S_{\rho,\d}^{-n(1-\rho)}$, where $0<\rho\le 1$ and $0\le \d<1$.
First, $T_a$ is of weak type $(1,1)$ (this result can be found in \cite[Th. 3.2]{AH}). Second, for every $r>1$,
$$
{\mathcal M}^{\#}_{T_a,3}f(x)\le C_rM_rf(x)
$$
(for the proof of this estimate see \cite[Th. 3.3]{MRS}). Therefore, $T_a$ satisfies~(\ref{sd}) for all $s>1$.
This result was obtained in \cite{BC}.
\end{example}

\begin{example}\label{ex5}
Given a function $A$ with $\nabla A\in BMO$, define the operator $T_A$ by
$$T_Af(x)=\int_{{\mathbb R}^n}K(x,y)\frac{A(x)-A(y)-\nabla A(y)(x-y)}{|x-y|}f(y)dy\quad(x\not\in \text{supp}\,f),$$
where $K$ is standard kernel (as defined in Section 4).

An argument in \cite[Th. 1]{HY} shows that
$$
{\mathcal M}^{\#}_{T_A,5\sqrt n}f(x)\le CMMf(x).
$$
Therefore, assuming additionally that $T_A$ is of weak type $(q,q), q>1$, we obtain that $T_A$ satisfies (\ref{sd}) for $s=q$.
See \cite{H}, where a more refined result with a specific $K$ is obtained.
\end{example}

\end{document}